\newtheorem{thm}{Theorem}
\newtheorem{lem}{Lemma}
\newtheorem{prop}[thm]{Proposition}
\newtheorem{remark}{Remark}
\def\dda{\rm{dda}}
\def\pk{\rm{pk}}
\def\pk{\rm {pk}}
\def\val{\rm {val}}
\def\da{\rm{da}}
\def\dd{\rm {dd}}
\def\pk{\rm {pk}}
\def\Z{\mathbb{Z}}
\def\S{{\mathfrak S}}
\def\B{\mathcal{B}}
\newcommand{\bea}{\begin{align}} 
\newcommand{\ena}{\end{align}}
\begin{document}
\title[A new combinatorial formula for  alternating descent polynomials]{A new combinatorial formula for alternating descent polynomials}
~\thanks{\today}
\author{Qiongqiong Pan }
\address{College of Mathematics and Physics, Wenzhou University\\
Wenzhou 325035, PR China}
\email{qpan@wzu.edu.cn}
\begin{abstract}  
We prove a combinatorial formula for the alternating 
descent polynomials of type A and B. Combining with Josuat-Vergès' combinatorial interpretation for Hoffman's derivative polynomials for tangent and secant functions,  we obtain
a unified combinatorial proof of Ma-Yeh's two 
 formulae linking these two families of polynomials.
  
%
\end{abstract}
\maketitle 
 \vskip 0.5cm 

\section{Introduction}


Let $\S_n$ denote the symmetric group of all permutations of $[n]:=\{1,2,\ldots,n\}$. For a permutation  $\sigma=\sigma(1)\sigma(2)\ldots\sigma(n)\in\S_n$,
 an index $i\in[n-1]$ is a  {\it{descent}} if  $\sigma(i)>\sigma(i+1)$. The generating function of 
descent statistic is the classical Eulerian polynomials.
In 2008  Chebikin~\cite{Che08} studied new statistics on permutations that are variations of the descent statistic.  We say that $\sigma=\sigma(1)\sigma(2)\cdots\sigma(n)$ has an \textit{alternating descent} at position $i$ if either $\sigma(i)>\sigma(i+1)$ and $i$ is odd, or else if $\sigma(i)<\sigma(i+1)$ and $i$ is even.

Let $\hat{D}(\sigma)$ be the set of  \textit{alternating descent positions} 
of $\sigma$, and set $\hat{d}(\sigma)=|\hat{D}(\sigma)|$. We define 
 the alternating Eulerian polynomials $\hat{A}(x)$
 by
\begin{align}
\hat{A}_n(x)=\sum_{\sigma\in\S_n}x^{\hat{d}(\sigma)}=\sum_{k=0}^{n-1}\hat{A}(n,k)x^k.
\end{align}

Denote by $\mathcal{B}_n$ the collection of permutations $\sigma$  of the set 
$[\pm n]:=\{ 1, 2,\ldots, n\}\cup \{ -1, -2,\ldots, -n\}$ such that $\sigma(-i)=\sigma(i)$ for all $i\in [n]$. Obviously, the word $|\sigma|:=|\sigma(1)|\cdots|\sigma(n)|$ is a permutation
in $\S_n$. An element of $\mathcal{B}_n$ is called a permutation of type $B$.
As usual, we always assume that type B permutations are prepended by $0$. 
That is, we identify an element $\sigma=\sigma(1)\cdots\sigma(n)$ in $\mathcal{B}_n$ with the word $\sigma(0)\sigma(1)\cdots\sigma(n)$, where $\sigma(0)=0$. 
Following Ma and Yeh~\cite{MY16}
we say that $\sigma\in\mathcal{B}_n$ has an alternating descent at position $i\in\{0\}\cup[n-1]$
if either $\sigma(i)<\sigma(i+1)$ and $i$ is even, or else if $\sigma(i)>\sigma(i+1)$ and $i$ is odd.

Let $\hat{D}_B(\sigma)$ be the set of positions at which $\sigma$ has an alternating descent, and set $\hat{d}_B(\sigma)=|\hat{D}_B(\sigma)|$. 
Similarly, we define the type B alternating Eulerian polynomials  $\hat{B}_n(x)$ by
\begin{align}
\hat{B}_n(x)=\sum_{\sigma\in\mathcal{B}_n}x^{\hat{d}_{B}(\sigma)}=\sum_{k=0}^{n}\hat{B}(n,k)x^k.
\end{align}

In what follows, to  any  $\sigma=\sigma(1)\ldots\sigma(n)\in\S_n$ we append 
 $\sigma(n+1)$ and prepend  $\sigma(0)$ in two different ways:
\begin{enumerate}
\item  $\sigma(0)=n+1$ and $\sigma(n+1)=n+1$; 
\item $\sigma(0)=0$ and $\sigma(n+1)=n+1$.
\end{enumerate}
Let $\S_n$ be the set of permutations  in $\S_n$ with the first convention and 
$\S_n^0$ the second convention, respectively. 
For  $\sigma\in\S_n\cup\S_n^0$, an integer $i\in[n]$ is 
\begin{itemize}
\item
a double ascent of $\sigma$ if $\sigma(i-1)<\sigma(i)<\sigma(i+1)$, 
\item a double descent of $\sigma$ if $\sigma(i-1)>\sigma(i)>\sigma(i+1)$, 
\item a valley of $\sigma$ if $\sigma(i-1)>\sigma(i)<\sigma(i+1)$,
\item a peak of $\sigma$ if $\sigma(i-1)<\sigma(i)>\sigma(i+1)$.
\end{itemize}
Let $\da(\sigma)$, $\mathrm{dd}(\sigma)$, $\val(\sigma)$ and  $\pk(\sigma)$ denote the number of double ascents,  double descents,
 valleys and  peaks of $\sigma$,  respectively. And let $\mathrm{DA}(\sigma)$, $\mathrm{DD}(\sigma)$, $\mathrm{VAL}(\sigma)$ and  $\mathrm{PEAK}(\sigma)$ denote the set of double ascents,  double descents,
 valleys and  peaks of $\sigma$,  respectively. It is easy to verify the
following facts:
\begin{align}\label{L1}
\forall \sigma\in\S_n, \quad \val(\sigma)&=\pk(\sigma)+1,\quad
2\,\val(\sigma)+\dda(\sigma)=n+1,\\
\forall \sigma\in\S_n^0, \quad 
\val(\sigma)&=\pk(\sigma), \quad 2\,\val(\sigma)+\dda(\sigma)=n,\label{L2}
\end{align}
where $\dda(\sigma)=\da(\sigma)+\dd(\sigma)$.
Our starting point is  the following result.
\begin{thm}\label{main1} 
We have 
\begin{align}
2^n\hat{A}_n(x)&=\sum_{\sigma\in\S_n}(1+x)^{\dda(\sigma)}2^{\val(\sigma)}(1+x^2)^{\pk(\sigma)},\label{pan1}\\
\hat{B}_n(x)&=\sum_{\sigma\in\S_n^0}(1+x)^{\dda(\sigma)}2^{\val(\sigma)}(1+x^2)^{\pk(\sigma)}.\label{pan2}
\end{align}
\end{thm}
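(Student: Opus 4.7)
\emph{Per-permutation identity.} The proof of both identities reduces to a per-permutation statement, established via a transfer-matrix computation. For $\tau\in\S_n^0$ (respectively $\sigma\in\S_n$ with convention (1)) and a sign vector $\epsilon=(\epsilon_1,\ldots,\epsilon_n)\in\{\pm 1\}^n$, let $\epsilon\tau$ denote the signed sequence with entries $\epsilon_i\tau(i)$. The heart of the argument is the identity
$$\sum_{\epsilon\in\{\pm 1\}^n} x^{\hat{d}_B(\epsilon\tau)} = (1+x)^{\dda(\tau)}\,2^{\val(\tau)}\,(1+x^2)^{\pk(\tau)},$$
together with its analogue $\sum_\epsilon x^{\hat{d}(\epsilon\sigma)} = (1+x)^{\dda(\sigma)}\,2^{\val(\sigma)}\,(1+x^2)^{\pk(\sigma)}$. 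The observation is that whether position $i$ is an alternating descent of $\epsilon\tau$ is determined entirely by $(\epsilon_i,\epsilon_{i+1})$, the parity of $i$, and the sign of $\tau(i+1)-\tau(i)$. Packaging this as a $2\times 2$ transfer matrix $M_i$ with entries in $\{1,x\}$, the sum over $\epsilon$ equals $\mathbf{e}_+^T M_0 M_1\cdots M_{n-1}\mathbf{1}$ in type B (with $\epsilon_0=+$ fixed by $\tau(0)=0$) or $\mathbf{1}^T M_1\cdots M_{n-1}\mathbf{1}$ in type A. A direct computation exhibits each $M_i$ as a rank-one matrix $u_iv_i^T$ in four cases indexed by the parity of $i$ and the sign of $\tau(i+1)-\tau(i)$. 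The identity $M_iM_{i+1}=(v_i^T u_{i+1})\,u_iv_{i+1}^T$ telescopes the product, and a case check shows that $v_i^T u_{i+1}$ depends only on the local shape of $\tau$ at position $i+1$: it equals $1+x$, $2$, or $1+x^2$ according as that position is a double ascent/descent, valley, or peak. The boundary factors ($\mathbf{e}_+^T u_0$ and $v_{n-1}^T\mathbf{1}$ in type B; $\mathbf{1}^T u_1$ and $v_{n-1}^T\mathbf{1}$ in type A) supply the shape contributions at positions $1$ and $n$.

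\emph{Deducing the global identities.} Identity \eqref{pan2} is immediate from the per-$\tau$ claim and the bijection $\mathcal{B}_n\leftrightarrow\S_n^0\times\{\pm 1\}^n$ sending $\sigma$ to $(|\sigma|,\mathrm{sgn}(\sigma))$. For \eqref{pan1}, the factor $2^n$ on the left is produced by the standardization map $\mathrm{std}:\mathcal{B}_n\to\S_n$, which sends a signed sequence $\tilde\sigma$ to the permutation giving the relative order of $\tilde\sigma(1),\ldots,\tilde\sigma(n)$. Standardization preserves $\hat{d}$ (alternating descents depend only on pairwise comparisons and the parity of the position) and is $2^n$-to-$1$: the preimages of $\sigma\in\S_n$ are indexed by the $2^n$ sign choices on the values $1,2,\ldots,n$, each choice giving the preimage whose $i$-th entry is the $\sigma(i)$-th smallest of the $n$ signed values. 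Consequently $\sum_{\tilde\sigma\in\mathcal{B}_n} x^{\hat{d}(\tilde\sigma)} = 2^n\hat{A}_n(x)$, whereas the same sum parametrized by $\mathcal{B}_n=\S_n\times\{\pm 1\}^n$ and evaluated via the per-$\sigma$ identity equals the right-hand side of \eqref{pan1}.

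\emph{Main obstacle.} The crux is the transfer-matrix case analysis: one must verify that the four forms of $M_i$ conspire so that the parity of $i$ does not appear in the final factorization, and that the boundary factors correctly encode the shape contributions at positions $1$ and $n$ under the two distinct conventions ($\tau(0)=0$ for \eqref{pan2} versus $\sigma(0)=n+1$ for \eqref{pan1}); these boundary differences are precisely what distinguish the two identities of the theorem.
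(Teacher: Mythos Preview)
Your proposal is correct, and the overall architecture matches the paper exactly: both prove the per-permutation (equivalently, per-orbit) identity
\[
\sum_{\epsilon\in\{\pm1\}^n} x^{\hat d(\epsilon\sigma)}
=(1+x)^{\dda(\sigma)}\,2^{\val(\sigma)}\,(1+x^2)^{\pk(\sigma)}
\]
and then sum over $\sigma$, using standardization to produce the factor $2^n$ on the type~A side and the bijection $\mathcal{B}_n\simeq\S_n^0\times\{\pm1\}^n$ on the type~B side.

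The only genuine difference is in how the per-permutation identity is established. The paper argues directly by position type: it observes that if $\sigma(j)<\sigma(j+1)$ then the alternating-descent status of position $j$ in $\epsilon\sigma$ depends only on $\epsilon_{j+1}$, and if $\sigma(j)>\sigma(j+1)$ it depends only on $\epsilon_j$; hence the sign at a valley is irrelevant (factor $2$), the sign at a peak simultaneously toggles two adjacent positions (factor $1+x^2$), and the sign at a double ascent or double descent toggles exactly one (factor $1+x$). Your transfer-matrix argument is an algebraic repackaging of precisely this observation: the one-sign dependence is exactly what makes each $M_j$ rank one, and the inner products $v_j^{T}u_{j+1}$ recover the same four-case table. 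The paper's version is shorter and more transparently combinatorial; your version has the advantage that the boundary contributions at positions $1$ and $n$ (which distinguish the two conventions $\sigma(0)=n+1$ versus $\tau(0)=0$) are handled uniformly by the initial and terminal vectors rather than by a separate remark, and it makes the cancellation of parity completely mechanical.
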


Hoffman~\cite{Hoff99} considered  the derivative 
  polynomials $P_n(x)$ and $Q_n(x)$ appearing in  the successive derivatives of $\tan x$ and $\sec x$:
\begin{align*}
\frac{\textrm{d}^n}{\textrm{d}x^n}\tan(x)=P_n(\tan(x)),\quad \frac{\textrm{d}^n}{\textrm{d}x^n}\sec(x)=Q_n(\tan(x))\sec(x).
\end{align*}
Clearly these polynomials satisfy the differential equations
\begin{align}\label{p10}
P_{n+1}(x)=(1+x^2)P_n^{'}(x), \quad Q_{n+1}(x)=xQ_n(x)+(1+x^2)Q_n^{'}(x).
\end{align}
Hoffman  proved the following generating function formulas:
\begin{align}
\sum_{n\geq0}P_n(x)\frac{z^n}{n!}&=\frac{\sin(z)+xcos(z)}{\cos(z)-x\sin(z)},\label{hof1}\\
\sum_{n\geq0}Q_n(x)\frac{z^n}{n!}&=\frac{1}{\cos(z)-x\sin(z)}.\label{hof2}
\end{align}
For  $n\geq1$, the following combinatorial interpretation are due to Josuat-Vergès~\cite{JV14}.
 \begin{align}
P_n(x)&=\sum_{\sigma\in\S_n}x^{\dda(\sigma)}(1+x^2)^{\val(\sigma)},\\ Q_n(x)&=\sum_{\sigma\in\S_n^0}x^{\dda(\sigma)}(1+x^2)^{\val(\sigma)}.
\end{align}
By \eqref{L1} and \eqref{L2}, we derive from the above identities that 
\begin{align}
(1-x)^{n+1}P_n\left(\frac{1+x}{1-x}\right)
&=\sum_{\sigma\in\S_n}(1+x)^{\dda(\sigma)}(2+2x^2)^{\val(\sigma)}\nonumber\\
&=(1+x^2)\sum_{\sigma\in\S_n}(1+x)^{\dda(\sigma)}2^{\val(\sigma)}(1+x^2)^{\pk(\sigma)},\label{A1}
\end{align}
and
\begin{align}
(1-x)^{n}Q_n\left(\frac{1+x}{1-x}\right)
&=\sum_{\sigma\in\S_n^0}(1+x)^{\dda(\sigma)}(2+2x^2)^{\val(\sigma)}\nonumber\\
&=\sum_{\sigma\in\S_n^0}(1+x)^{\dda(\sigma)}2^{\val(\sigma)}(1+x^2)^{\pk(\sigma)}.\label{A2}
\end{align}
Combining Theorem~\ref{main1} with \eqref{A1} and \eqref{A2} we derive  the following relation linking the derivative polynomials and descent polynomials.
\begin{thm}\label{ma-yeh} We have 
\begin{align}\label{my1}
2^n(1+x^2)\hat{A}_n(x)=&(1-x)^{n+1}P_n\left(\frac{1+x}{1-x}\right),\\
\hat{B}_n(x)=&(1-x)^nQ_n\left(\frac{1+x}{1-x}\right).\label{my2}
\end{align}
\end{thm}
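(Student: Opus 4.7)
The proof plan is to observe that essentially all the combinatorial and algebraic work has already been assembled in the excerpt. The strategy is simply to match up the two combinatorial expressions we now have: the one for $2^{n}\hat{A}_n(x)$ and $\hat{B}_n(x)$ provided by Theorem~\ref{main1}, and the one for $(1-x)^{n+1}P_n\!\left(\tfrac{1+x}{1-x}\right)$ and $(1-x)^{n}Q_n\!\left(\tfrac{1+x}{1-x}\right)$ obtained by substituting $\tfrac{1+x}{1-x}$ into Josuat-Vergès' formulas and using \eqref{L1}--\eqref{L2}.

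First, I would recall how identities \eqref{A1} and \eqref{A2} are produced: starting from Josuat-Vergès' combinatorial expressions for $P_n(x)$ and $Q_n(x)$, one substitutes $y=\tfrac{1+x}{1-x}$ so that $1+y^2 = \tfrac{2(1+x^2)}{(1-x)^2}$, and multiplies by the correct power of $1-x$, which by \eqref{L1}--\eqref{L2} is $(1-x)^{n+1}$ in the $\S_n$ case (since $2\val+\dda = n+1$) and $(1-x)^{n}$ in the $\S_n^0$ case (since $2\val+\dda = n$). The factor $2^{\val(\sigma)}$ is then extracted from $(2+2x^{2})^{\val(\sigma)}$, and the identities $\val(\sigma)=\pk(\sigma)+1$ for $\sigma\in\S_n$ and $\val(\sigma)=\pk(\sigma)$ for $\sigma\in\S_n^{0}$ pull out exactly the factor $(1+x^{2})$ that appears in \eqref{A1} but not in \eqref{A2}.

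Next, I would directly compare \eqref{A1} with \eqref{pan1}: their right-hand sides differ only by the scalar factor $(1+x^{2})$, yielding
\[
(1-x)^{n+1}P_n\!\left(\frac{1+x}{1-x}\right) = (1+x^{2})\cdot 2^{n}\hat{A}_n(x),
\]
which is \eqref{my1}. Similarly, comparing \eqref{A2} with \eqref{pan2} gives the right-hand sides as literally equal, so
\[
(1-x)^{n}Q_n\!\left(\frac{1+x}{1-x}\right) = \hat{B}_n(x),
\]
which is \eqref{my2}.

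The main obstacle in this proof is not really an obstacle at the level of Theorem~\ref{ma-yeh} itself: once Theorem~\ref{main1} is in hand together with Josuat-Vergès' interpretation, the result follows by one line of bookkeeping. The substantive content is hidden in Theorem~\ref{main1}, whose proof is the real work of the paper; from the corollary's point of view, the only thing to keep straight is the parity identity $2\val+\dda = n+1$ versus $n$ that distinguishes the type $A$ and type $B$ cases, together with the $\val=\pk+1$ versus $\val=\pk$ discrepancy, which is exactly what produces the extra $(1+x^{2})$ factor in \eqref{my1}.
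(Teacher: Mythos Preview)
Your proposal is correct and follows exactly the paper's approach: the paper derives \eqref{A1} and \eqref{A2} by substituting $\tfrac{1+x}{1-x}$ into Josuat-Verg\`es' formulas and using \eqref{L1}--\eqref{L2}, then states that combining these with Theorem~\ref{main1} gives Theorem~\ref{ma-yeh}. Your write-up makes the bookkeeping (the role of $2\val+\dda$ and of $\val=\pk+1$ versus $\val=\pk$) slightly more explicit than the paper does, but the argument is identical.
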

Now, invoking the generating functions  \eqref{hof1} and \eqref{hof2}, Theorem~\ref{ma-yeh} implies  immediately 
 the generating functions of the descent polynomials.
\begin{thm}\label{theorem1}
We have
\begin{align}\label{che}
\sum_{n>0}\hat{A}_n(x)\frac{z^n}{n!}&=\frac{\sec(1-x)z+\tan(1-x)z-1}{1-x(\sec(1-x)z+\tan(1-x)z)},\\
\label{p6}
\sum_{n\geq0}\hat{B}_n(x)\frac{z^n}{n!}&=\frac{x-1}{(x-1)\cos(z(1-x))+(x+1)\sin(z(1-x))}.
\end{align}
\end{thm}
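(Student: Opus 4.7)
The plan is to substitute the identities of Theorem~\ref{ma-yeh} termwise into the exponential generating functions of $\hat{A}_n(x)$ and $\hat{B}_n(x)$, converting each sum into a specialization of Hoffman's generating functions~\eqref{hof1}--\eqref{hof2}. The only real work will be a trigonometric identification on the $\hat{A}_n$ side; the $\hat{B}_n$ side is essentially a direct substitution.

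I would start with the type B identity~\eqref{p6}. Multiplying~\eqref{my2} by $z^n/n!$ and summing over $n\geq 0$ produces the series $\sum_{n\geq 0} Q_n(y)\,w^n/n!$ at $y=(1+x)/(1-x)$ and $w=(1-x)z$. Applying~\eqref{hof2} and clearing the common factor $1/(1-x)$ then yields~\eqref{p6} with no further manipulation.

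The type A identity~\eqref{che} is more delicate because of the factor $2^n(1+x^2)$ appearing in~\eqref{my1}, which forces the substitution $u=(1-x)z/2$ together with an overall prefactor $(1-x)/(1+x^2)$. After invoking~\eqref{hof1} and subtracting the $n=0$ contribution $P_0(y)=y$ (needed because~\eqref{my1} fails at $n=0$, consistently with the fact that~\eqref{che} is summed over $n>0$), a short simplification using the algebraic identity $1+y^2=2(1+x^2)/(1-x)^2$ at $y=(1+x)/(1-x)$ collapses the prefactor and produces an expression of the form $\frac{2\sin u}{(1-x)\cos u-(1+x)\sin u}$.

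The main, and only non-routine, step will be to recognize this expression as the right-hand side of~\eqref{che}, whose argument is $(1-x)z=2u$ rather than $u$. For this I would use the half-angle identity $\sec(2u)+\tan(2u)=\frac{\cos u+\sin u}{\cos u-\sin u}$: writing $S:=\sec((1-x)z)+\tan((1-x)z)$, both $S-1=\frac{2\sin u}{\cos u-\sin u}$ and $1-xS=\frac{(1-x)\cos u-(1+x)\sin u}{\cos u-\sin u}$ share the denominator $\cos u-\sin u$, and their ratio matches the expression obtained above. I expect this one-line double-angle manipulation to be the only step requiring any thought in the entire argument.
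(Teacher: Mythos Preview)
Your argument is correct and matches the paper's primary derivation: the paper states just before Theorem~\ref{theorem1} that the result follows ``immediately'' from Theorem~\ref{ma-yeh} together with Hoffman's formulas~\eqref{hof1}--\eqref{hof2}, and your write-up simply supplies those details (including the half-angle identification for~\eqref{che}). The paper does also present, in Section~3, an independent Chebikin-style proof of~\eqref{p6} via the subset-sum identity $\hat\alpha_n^-(\bar S)=\binom{n}{co(\bar S)}S_{s_1}\prod DU^{(B)}_{s_i-s_{i-1}}$ and the snake generating function~\eqref{pan}, but that is offered as an alternative rather than as the main proof.
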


Ma and Yeh~\cite{MY16} proved that  \eqref{my1} and \eqref{che} are equivalent and  conjectured \eqref{my2}. Formula \eqref{che} is due to 
Chebikin~\cite{Che08} 
and further generalized by Remmel~\cite{Re12} and Gessel and Zhuang~\cite{GZ}.
Recently, several \emph{classical approaches} to
\eqref{che} and \eqref{p6} were given by Ma-Fang-Mansour-Yeh~\cite{MA2022},
Lin-Ma-Wang-Wang~\cite[Theorem 2.1]{L-M-W-W} and   Ding-Zhu~\cite{D-Z}. Here, by \emph{classical approach} we mean that the proof consists of first establishes a linear recurrence relation of the polynomials, then converting the latter into a differential equation of the generating function, and finally solving the differential equation.

%

%
%

We shall  prove Theorem~\ref{main1} 
in section~2 and  give another proof of~\eqref{p6} in Section 3, which is inspired  by
Chebikin's proof of \eqref{che}  in \cite{Che08}.

\section{Proof of Theorem~\ref{main1}}

For a $\sigma\in\S_n$, we can obtain $2^n$ of type B permutations by signing $\sigma(i)$ with $\epsilon_i\in\{-1,1\}$, for $i\in[n]$.
Since sign $\epsilon_i$'s can be seen as involutions and that they commute, hence the group $\mathbb{Z}_2^n$ acts on $\S_n$ via the functions $\prod_{i\in S}\epsilon_i$, $S\subset [n]$. For $\sigma\in\S_n$, let $\mathrm{Orb}(\sigma)=\{g(\sigma):g\in\mathbb{Z}_2^n\}$ be the orbit of $\sigma$ under the signed action. For example, the orbit of $123$ in $\B_3$   has the following elements:
$$
\mathrm{Orb}(123)=\{123,\; \bar 1 23,\; 1\bar 2 3,\;  \;12\bar 3,\;
\bar 1\bar 23,\; 1\bar 2\bar 3,\;  \bar 12 \bar 3,\;
\bar 1\bar 2\bar 3\}.
$$

For convention, we still use $\hat{d}(\pi)$ to denote the number of alternating descent of $\pi\in\mathcal{B}_n$, the only difference of $\hat{d}(\pi)$ from $\hat{d}_B(\pi)$ is that we do not consider the 0 position in $\hat{d}(\pi)$, i.e., 
\begin{subnumcases}
{\hat{d}(\pi)=}\nonumber
\hat{d}_B(\pi)-1,& if\,\,$\pi(1)>0$\\\nonumber
\hat{d}_B(\pi),&if\,\,$\pi(1)<0$.
\end{subnumcases}

Let $E=\{a_1, \ldots, a_n\}$ be a subtset of $\Z$. A permutation (or list)
$\pi$ of elements of $E$  is order isomorphic to $\sigma\in \S_n$ if $\sigma$ is the permutation obtained from $\pi$ by replacing its $i$-th smallest entry with $i$.  For any $\sigma\in \S_n$, let $\mathrm{Iso}(\sigma)$ be the set of 
permutations isomorphic to 
$\sigma$ in $\B_n$. For example,  the permutations isormorphic to $123$ in $\B_3$ are
$$
\mathrm{Iso}(123)=\{123,\; \bar 1 23,\; \bar 3\bar2\bar 1, \;\bar 3 \bar 2 1,\; \bar 2 1 3,\;  \bar 2\bar 1 3,\;\bar 3 1 2,\;
 \bar 3\bar 1 2\}.
$$
Clearly we have
\begin{equation}\label{eqL}
2^n\hat{A}_n(x)=
\sum_{\sigma\in\S_n}\sum_{\pi\in \mathrm{Iso}(\sigma)}
x^{\hat{d}(\pi)}=\sum_{\pi\in\mathcal{B}_n}x^{\hat{d}(\pi)}.
\end{equation}


\begin{lem}\label{proposition1}
For $\sigma\in\S_n$ and $\pi=\epsilon_1\sigma(1)\epsilon_2\sigma(2)\cdots\epsilon_n\sigma(n)\in\mathrm{Orb}(\sigma)$.
\begin{itemize}
\item [(i)]
If $i$ is a double ascent in $\sigma$, then whether $i-1$ is an alternating descent position in $\pi$ or not, it is only determined by the sign of $\pi(i)$.
\item[(ii)]
If $i$ is a double descent in $\sigma$, then whether $i$ is an alternating descent position in $\pi$ or not, it is only determined by the sign of $\pi(i)$.
\item[(iii)]
If $i$ is a peak, then whether both $i-1$ and $i$ are alternating descents positions or neither $i-1$ nor $i$ is alternating descent position, it is only determined by the sign of $\pi(i)$.
\item[(iv)]
If $i$ is a valley, then whether $i$ is an alternating descent or not, it is only determined by the sign of $\pi(i+1)$.
\end{itemize}
\end{lem}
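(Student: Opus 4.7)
The plan is to reduce all four parts to one elementary observation about signed integers: if $|\pi(a)|<|\pi(b)|$, then the sign of $\pi(b)-\pi(a)$ equals the sign of $\pi(b)$. Indeed, when $\pi(b)>0$ we have $\pi(b)>|\pi(a)|\ge\pi(a)$, and when $\pi(b)<0$ we have $\pi(b)<-|\pi(a)|\le\pi(a)$. Since $|\pi(j)|=\sigma(j)$ for every $j$, the order relations on $\sigma$ assumed in (i)--(iv) translate immediately into strict inequalities between absolute values of entries of $\pi$, and the key observation then pinpoints exactly which sign controls which adjacent comparison.

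For part (i), a double ascent at $i$ gives $\sigma(i-1)<\sigma(i)$, hence $|\pi(i-1)|<|\pi(i)|$; the ascent/descent direction at position $i-1$ in $\pi$ is thus governed by the sign of $\pi(i)$, and since the parity of $i-1$ is fixed with $\sigma$, whether $i-1$ is an alternating descent of $\pi$ is determined by $\mathrm{sgn}(\pi(i))$. Parts (ii) and (iv) are the symmetric situations: in (ii), $\sigma(i)>\sigma(i+1)$ gives $|\pi(i)|>|\pi(i+1)|$, so the comparison at position $i$ is controlled by $\mathrm{sgn}(\pi(i))$; in (iv), $\sigma(i)<\sigma(i+1)$ gives $|\pi(i)|<|\pi(i+1)|$, so the comparison at position $i$ is controlled by $\mathrm{sgn}(\pi(i+1))$.

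Part (iii) is the only one requiring a little more care. At a peak, $\sigma(i)$ dominates both neighbours, so $|\pi(i-1)|<|\pi(i)|$ and $|\pi(i+1)|<|\pi(i)|$. The key observation forces both adjacent comparisons (at positions $i-1$ and $i$) to be controlled by $\mathrm{sgn}(\pi(i))$: if $\pi(i)>0$ then $\pi$ has an actual peak at $i$ (so $i-1$ is an ascent and $i$ a descent in $\pi$), while if $\pi(i)<0$ then $\pi$ has a valley at $i$ (with the opposite directions). In either scenario the two adjacent comparisons point in opposite directions, and because $i-1$ and $i$ have opposite parities the parity-direction matchings defining an alternating descent toggle in parallel at the two positions. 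Consequently either both positions are alternating descents or neither is, and which of the two outcomes occurs is determined by $\mathrm{sgn}(\pi(i))$ together with the fixed parity of $i$. I expect (iii) to be the only real obstacle, and it is fully resolved by this parity-matching argument once the key sign observation is in hand.
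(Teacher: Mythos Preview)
Your proposal is correct and follows essentially the same approach as the paper: both proofs rest on the single observation that when $|\pi(a)|<|\pi(b)|$ the order between $\pi(a)$ and $\pi(b)$ is governed solely by the sign of $\pi(b)$ (the paper writes this in the $\epsilon_i$ notation, you in absolute-value notation), and then read off (i)--(iv) from it. Your treatment of (iii) is in fact more explicit than the paper's, which simply says the four cases ``can be verified easily'' once the key observation is made.
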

\begin{proof} 
For $\sigma\in\S_n$, let $\pi=\epsilon_1\sigma(1)\epsilon_2\sigma(2)\cdots\epsilon_n\sigma(n)\in\mathrm{Orb}(\sigma)$, if $i\in[n-1]$ and $\sigma(i-1)<\sigma(i)$, then we have
\begin{subnumcases}
{}\nonumber
\epsilon_{i-1}\sigma(i-1)<\epsilon_i\sigma(i),& if\,\,$\epsilon_i=1$\\\nonumber
\epsilon_{i-1}\sigma(i-1)>\epsilon_i\sigma(i),&if\,\,$\epsilon_i=-1$.
\end{subnumcases}
So, whether $i-1$ is an alternating descent position in $\pi$ or not, it is only determined by the sign of $\pi(i)$.
Similarly, if $\sigma(i-1)>\sigma(i)$, whether $i-1$ is an alternating descent position in $\pi$ or not, it is only determined by the sign of $\pi(i-1)$.

The properties (i)-(iv)  can be verified easily 
by the above oberservations.
\end{proof}
Then we prove the following Lemma.
\begin{lem}\label{thm1}
We have 
\begin{align}\label{Qi1}
\sum_{\pi\in\mathrm{Orb}(\sigma)}x^{\hat{d}(\pi)}=(1+x)^{\dda(\sigma)}2^{\val(\sigma)}(1+x^2)^{\pk(\sigma)}.
\end{align}
\end{lem}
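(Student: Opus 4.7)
The plan is to evaluate the orbit sum by factoring it over the $n$ independent sign choices $\epsilon_1,\dots,\epsilon_n$. The heart of the argument is to track, for each $\epsilon_i$, precisely which alternating descent positions of $\pi$ are affected by flipping it.

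First I would make the following bookkeeping observation, which is really the content of Lemma~\ref{proposition1} repackaged as a partition statement. An index $j \in [n-1]$ is an alternating descent position of $\pi$ depending only on the sign of $\pi(j)$ vs.\ $\pi(j+1)$, and this comparison is governed by a single sign:
\begin{align*}
\pi(j) < \pi(j+1) &\iff \epsilon_{j+1}=1 \quad\text{when } \sigma(j)<\sigma(j+1),\\
\pi(j) < \pi(j+1) &\iff \epsilon_{j}=-1 \quad\text{when } \sigma(j)>\sigma(j+1).
\end{align*}
So each of the $n-1$ alternating descent positions is ``controlled'' by exactly one of $\epsilon_1,\dots,\epsilon_n$, and the sets of positions controlled by different signs are disjoint. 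This lets me write the orbit sum as a product $\prod_{i=1}^{n} F_i(x)$, where $F_i(x)$ is obtained by summing $x^{(\text{alt.\ desc.\ contributions from positions controlled by }\epsilon_i)}$ over $\epsilon_i\in\{\pm1\}$.

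Next I would classify the positions controlled by $\epsilon_i$ according to the type of $i$ in $\sigma$, using the rule above:
\begin{itemize}
\item If $i$ is a double ascent, only $j=i-1$ is controlled by $\epsilon_i$; the two sign choices give one alternating descent and one non-descent, so $F_i(x)=1+x$.
\item If $i$ is a double descent, only $j=i$ is controlled, again with $F_i(x)=1+x$.
\item If $i$ is a peak, both $j=i-1$ and $j=i$ are controlled by $\epsilon_i$; checking parities one verifies that one sign choice makes both positions alternating descents while the other makes neither, giving $F_i(x)=1+x^2$.
\item If $i$ is a valley, no position is controlled by $\epsilon_i$, so $F_i(x)=2$.
\end{itemize}
Multiplying over $i$ then yields $(1+x)^{\dda(\sigma)}2^{\val(\sigma)}(1+x^2)^{\pk(\sigma)}$, which is~\eqref{Qi1}.

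As a sanity check, the total number of controlled positions is $\dda(\sigma)+2\pk(\sigma)$, and by \eqref{L1} this equals $\dda(\sigma)+2(\val(\sigma)-1)=(n+1)-2=n-1$, as required. The only step needing genuine attention is the boundary: with the convention $\sigma(0)=\sigma(n+1)=n+1$, position~$1$ is necessarily a valley or a double descent and position~$n$ a valley or a double ascent, while positions $0$ and $n$ are \emph{not} counted in $\hat{d}(\pi)$ (no boundary entry is appended to $\pi$). A quick case-check shows this is consistent with the rule above (e.g.\ if $i=1$ is a valley, $\epsilon_1$ controls nothing so $F_1(x)=2$; if $i=1$ is a double descent, $\epsilon_1$ controls $j=1$ and $F_1(x)=1+x$), so no spurious contributions arise from the endpoints.
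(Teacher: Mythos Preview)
Your proof is correct and follows essentially the same approach as the paper: both arguments use Lemma~\ref{proposition1} to observe that the alternating-descent contribution of each sign $\epsilon_i$ is determined independently by the type of $i$ in $\sigma$ (double ascent/descent, peak, valley), and then multiply the resulting factors $(1+x)$, $(1+x^2)$, $2$. Your write-up is somewhat more explicit about the ``control'' partition of $[n-1]$ and the boundary checks at $i=1,n$, but the underlying idea is identical.
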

\begin{proof}
In Lemma~\ref{proposition1}, for the case (iv), if $i$ is a valley of $\sigma$, then $i+1$ must be a peak or a double ascent in $\sigma$, so for position $i$, whether it can be an alternating descent or not should be considered in (i) or (iii) case, which means for $\pi\in\mathrm{Orb}(\sigma)$, the number of alternating descent of $\pi$ is only determined by the signs of peaks, double ascents and double descents of $\sigma$. That is, if sign a peak of $\sigma$ with $\epsilon_i\in\{-1,1\}$, then it will contribute either 0 or 2 alternating descents, i.e., $(1+x^2)$. If sign a double descent or a double ascent with $\epsilon_i\in\{-1,1\}$, then it will contribute either 0 or 1 alternating descent, i.e., $(1+x)$. For valleys of $\sigma$, each valley has two possibilities to sign it. Then we have~\eqref{Qi1}.
\end{proof}
Since
\begin{align}
\sum_{\pi\in\mathcal{B}_n}x^{\hat{d}(\pi)}=\sum_{\sigma\in\S_n}\sum_{\pi\in\mathrm{Orb}(\sigma)}x^{\hat{d}(\pi)},
\end{align}
by Lemma~\ref{thm1} we have~\eqref{pan1}.
Similarly, we prove~\eqref{pan2}.
\begin{remark}
In~\cite{JV14}, Josuat-Verg\`es considered a q-version of $Q_n(x)$ with a combinatorial interpretation in terms of cycle-alternating permutations, so it is natural to ask 
if  one can give a unified $q$-version for~\eqref{my1} and~\eqref{my2}.
\end{remark}
\section{Another proof of Theorem~\eqref{p6}}

Let $\sigma$ be  a permutation of type A or B. 
We say that $\sigma$  is 
an \textit{(down-up) alternating permutation}
with length $n$ if $\sigma(1)>\sigma(2)<\sigma(3)>\sigma(4)\cdots$. It is well known that the numbers of 
alternating permutations of type A are 
the Euler numbers $E_n$, which have  the  exponential generating function
\begin{align}\label{euler}
1+\sum_{n>0}E_n\frac{x^n}{n!}=\tan x+ \sec x.
\end{align}
 Let $\mathcal{DU}_n^{(B)}$ be the set of (down-up) alternating permutations of type B with length $n$.  Let $DU_n^{(B)}$ be the cardinality of $\mathcal{DU}_n^{(B)}$, clearly, we have $DU_n^{(B)}=2^nE_n$. Similarly we define 
the  \textit{up-down alternating permutations of type} A or B.
An alternating permutation of type B starting with a positive entry is called  a \textit{snake} (of type B). Let $\mathcal{S}_n$ be the set of snakes  of  
length $n$. Arnol'd~\cite{Arn92} showed that 
the  numbers    of snakes $S_n:=|\mathcal{S}_n|$   have  the generating function~\cite{Sp71},
\begin{align}\label{pan}
1+\sum_{n>0}S_n\frac{x^n}{n!}=\frac{1}{\cos(x)-\sin(x)}.
\end{align}

Let $\mathcal{B}_n^-=\{\sigma\in\mathcal{B}_n|\sigma(1)<0\}$, $\mathcal{B}_n^+=\{\sigma\in\mathcal{B}_n|\sigma(1)>0\}$, and  denote

\begin{align*}
\hat{B}_n^-(x)=\sum_{\sigma\in\mathcal{B}_n^-}x^{\hat{d}_B(\sigma)},\quad \hat{B}_n^+(x)=\sum_{\sigma\in\mathcal{B}_n^+}x^{\hat{d}_B(\sigma)},
\end{align*}
then $\mathcal{B}_n=\mathcal{B}_n^-\cup\mathcal{B}_n^+$ and $\hat{B}_n(x)=\hat{B}_n^-(x)+\hat{B}_n^+(x)$.

We begin by deducing a formula for the number of permutations in $\mathcal{B}_n^-$ with a given alternating descent set. For $\bar{S}\subseteq[n-1]$, let $\hat{\beta}_n^-(\bar{S})$ be the number of permutations $\sigma\in\mathcal{B}_n^-$ with $\hat{D}_B(\sigma)=\bar{S}$, and let $\hat{\alpha}_n^-(\bar{S})=\sum_{T\subseteq \bar{S}}\hat{\beta}_n^-(T)$ be the number of permutations $\sigma\in\mathcal{B}_n^-$ with $\hat{D}_B(\sigma)\subseteq \bar{S}$. For $\bar{S}=\{s_1<s_2<\cdots<s_k\}\subseteq[n-1]$, let $co(\bar{S})$ be the composition $(s_1,s_2-s_1,s_3-s_2,\ldots,s_k-s_{k-1},n-s_k)$ of $n$, and for a composition $\gamma=(\gamma_1,\ldots,\gamma_l)$ of $n$, let $\bar{S}_{\gamma}$ be the subset $\{\gamma_1,\gamma_1+\gamma_2\,\ldots,\gamma_1+\cdots+\gamma_{l-1}\}$ of $[n-1]$. Also, define
\begin{align*}
{n\choose \gamma}:={n\choose \gamma_1,\cdots,\gamma_l}=\frac{n!}{\gamma_1!\cdots\gamma_l!}.
\end{align*}

For $\sigma\in\mathcal{B}_n$, let $\sigma^-=\sigma^-(1)\ldots\sigma^-(n)\in\mathcal{B}_n$, where $\sigma^-(i)=-\sigma(i)$. Obviously, $\sigma\longmapsto\sigma^-$ is a bijection on $\mathcal{B}_n$.
Now, we have the following lemma.
\begin{lem}
We have 
\begin{align}\label{key}
\hat{\alpha}_n^-(\bar{S})={n\choose s_1, s_2-s_1,\cdots, n-s_k}\cdot S_{s_1}\cdot DU_{s_2-s_1}^{(B)}\cdots DU_{n-s_k}^{(B)}.
\end{align}
\end{lem}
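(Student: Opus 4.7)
The plan is to decompose each $\sigma\in\mathcal{B}_n^-$ with $\hat{D}_B(\sigma)\subseteq\bar{S}$ into $k+1$ consecutive blocks delimited by $\bar{S}$, distribute absolute values among the blocks, and count the admissible signed arrangements of each block independently.

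First, I would unpack the constraint $\hat{D}_B(\sigma)\subseteq\bar{S}$ position by position. Since $\sigma\in\mathcal{B}_n^-$ satisfies $\sigma(1)<0=\sigma(0)$ and $0$ is even, position $0$ is never an alternating descent, consistent with $\bar{S}\subseteq[n-1]$. For each $i\in[n-1]\setminus\bar{S}$, the failure of the alternating-descent condition at $i$ forces $\sigma(i)<\sigma(i+1)$ when $i$ is odd and $\sigma(i)>\sigma(i+1)$ when $i$ is even. With the convention $s_0=0$ and $s_{k+1}=n$, this means every block $B_j:=\sigma(s_j+1)\sigma(s_j+2)\cdots\sigma(s_{j+1})$ is alternating (up-down or down-up according to the parity of $s_j$), while the initial block $B_0$ additionally starts with $\sigma(1)<0$.

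Next I would count. Choosing which absolute values of $[n]$ go into each block contributes the multinomial factor $\binom{n}{s_1,\,s_2-s_1,\ldots,n-s_k}$. Given the $m$ absolute values assigned to a block, the admissible sign-and-order arrangements biject, via the unique order-preserving relabeling of absolute values to $[m]$, with the admissible permutations in $\mathcal{B}_m$ subject to the appropriate alternating pattern. For the initial block $B_0$, these are the up-down alternating permutations of $\mathcal{B}_{s_1}$ with negative first entry; global negation on $\mathcal{B}_{s_1}$ places these in bijection with snakes of length $s_1$, giving the factor $S_{s_1}$. For each later block $B_j$ ($j\geq 1$) of length $m_j=s_{j+1}-s_j$, the forced pattern is up-down or down-up depending on the parity of $s_j$, but the negation involution on $\mathcal{B}_{m_j}$ exchanges the two types of alternating permutations, so in either parity the count is $DU_{m_j}^{(B)}$. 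Multiplying these factors yields the claimed identity.

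The main care required is in (i) handling the boundary at position $0$ together with the sign constraint on $B_0$ so as to correctly obtain the snake count, and (ii) uniformly treating both parities of $s_j$ for the later blocks via the global negation involution on $\mathcal{B}_{m_j}$.
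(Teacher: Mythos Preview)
Your proposal is correct and follows essentially the same approach as the paper: decompose $\sigma$ into consecutive blocks at the positions of $\bar{S}$, distribute absolute values via the multinomial coefficient, observe that each block must be alternating of the appropriate type, and identify the first block (after negation) with a snake and the remaining blocks with type~B alternating permutations counted by $DU^{(B)}$. Your write-up is in fact slightly more explicit than the paper's in verifying the parity at position $0$ and in invoking the negation involution to unify the up-down and down-up cases for the later blocks.
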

\begin{proof}
Let $\bar{S}=\{s_1<s_2<\cdots<s_k\}\subseteq[n-1]$. Set $s_0=0$ and $s_{k+1}=n$ for convenience. The alternating descent set of a permutation $\sigma\in\mathcal{B}_n^-$ is contained in $\bar{S}$ if and only if for all $1< i\leq k+1$, the subword $\tau_i=\sigma(s_{i-1}+1)\sigma(s_{i-1}+2)\cdots\sigma(s_i)$ forms either an up-down(if $s_{i-1}$ is even) or a down-up(if $s_{i-1}$ is odd) alternating permutations of type B. And for the subword $\tau_1=\sigma(s_0+1)\sigma(s_0+2)\cdots\sigma(s_1)$,  $\tau^-_1$ forms a snake of type B. Thus to construct a permutation $\sigma$ with $\hat{D}_B(\sigma)\subseteq \bar{S}$, one must choose one of the ${n\choose co(\bar{S})}$ ways to distribute the elements of $[n]$ among the subwords $|\tau_1|,\ldots,|\tau_{k+1}|$, then for each $1<i\leq k$ choose one of the $\mathrm{DU}_{s_i-s_{i-1}}^{(B)} $ways of ordering the signed elements within the subword $\tau_i$ and for $i=1$ choose one of the $\mathrm{S}_{s_1}$ ways of ordering the signed elements within the subword $\tau^-_1$. Then the equation~\eqref{key} follows.
\end{proof}
Now, consider the sum
\begin{align}\label{p1}
\sum_{\bar{S}\subseteq[n-1]}\hat{\alpha}_n^-(\bar{S})\cdot x^{|\bar{S}|}=\sum_{\sigma\in\mathcal{B}^-_n}(\sum_{\hat{D}_B(\sigma)\subseteq T}x^{|T|}).
\end{align}
The right hand side of~\eqref{p1} is equal to 
\begin{align}
\sum_{\sigma\in\mathcal{B}_n^-}\sum_{T\supseteq\hat{D}_B(\sigma)}x^{\hat{d}_B(\sigma)+|T-\hat{D}_B(\sigma)|}&=\sum_{\sigma\in\mathcal{B}_n^-}x^{\hat{d}_B(\sigma)}\sum_{i=0}^{n-1-\hat{d}_B(\sigma)}{n-1-\hat{d}_B(\sigma)\choose i}x^i\nonumber\\
&=\sum_{\sigma\in\mathcal{B}_n^-}x^{\hat{d}_B(\sigma)}(1+x)^{n-1-\hat{d}_B(\sigma)},\label{p2}
\end{align}
as there are ${n-1-\hat{d}_B(\sigma)\choose i}$ subsets with $i$ elements  of $[n-1]\setminus\hat{D}_B(\sigma)$ containing $\hat{D}_B(\sigma)$.
Continuing with the right hand side of~\eqref{p2}, we get
\begin{align}\label{p3}
(1+x)^{n-1}\sum_{\sigma\in\mathcal{B}_n^-}\left(\frac{x}{1+x}\right)^{\hat{d}_B(\sigma)}=(1+x)^{n-1}\hat{B}_n^-\left(\frac{x}{1+x}\right).
\end{align}
Combining equations~\eqref{p1}-\eqref{p3}, we obtain
\begin{align}\label{p4}
\sum_{n\geq1}\left(\sum_{S\subseteq[n-1]}\hat{\alpha}_n(S)\cdot x^{|S|}\right)\frac{z^n}{n!}=\frac{1}{1+x}\sum_{n\geq1}\hat{B}_n^-\left(\frac{x}{1+x}\right)\cdot\frac{z^n(1+x)^n}{n!},
\end{align}
since $\bar{S}\longmapsto co(\bar{S})$ is a bijection between $[n-1]$ and the set of compositions of $n$, the left hand side of~\eqref{p4} is 
\begin{align*}
&\sum_{n\geq1}\left(\sum_{\gamma}\frac{S_{s_1}}{\gamma_1!}\cdot\frac{DU_{\gamma_2}^{(B)}}{\gamma_2!}\cdots\frac{DU_{\gamma_l}^{(B)}}{\gamma_l!}\cdot x^{l-1}\right)\cdot z^n\\
&=\sum_{l\geq 1}x^{l-1}\left(\sum_{i\geq 1}\frac{DU_i^{(B)}z^i}{i!}\right)^{l-1}\left(\sum_{i\geq1}\frac{S_iz^i}{i!}\right).
\end{align*}
As $DU_n^{(B)}=2^nE_n$, by \eqref{euler} 
and~\eqref{pan}, the last sum is equal to 
\begin{align*}
&\sum_{l\geq1}x^{l-1}(\tan(2z)+\sec(2z)-1)^{l-1}\cdot\left(\frac{1}{\cos(z)-\sin(z)}-1\right)\\
&=\left(\frac{1}{\cos(z)-\sin(z)}-1\right)\left(\frac{1}{1-x(\tan(2z)+\sec(2z)-1)}\right).
\end{align*}
thus, we have
\begin{align*}
\sum_{n\geq1}\hat{B}_n^-\left(\frac{x}{1+x}\right)\frac{[z(1+x)]^n}{n!}=
\frac{1+x}{1-x(\tan(2z)+\sec(2z)-1)}\left(\frac{1}{\cos(z)-\sin(z)}-1\right).
\end{align*}
\vskip 0.3cm
Replacing $z$ by $\frac{z}{1+x}$, $x$ by $\frac{x}{1-x}$, we obtain an explicit formula for the exponential generating function of $\hat{B}_n^-(x)$,

\begin{align}
\sum_{n\geq1}\hat{B}_n^-(x)\frac{z^n}{n!}=\frac{\cos(z(x-1))+\sin(z(x-1))-1}{(x-1)\cos(z(x-1))-(x+1)\sin(z(x-1))}.
\end{align}

Now, let $\hat{B}_0(x)=1$ then we can give an explicit formula for the exponential generating function of alternating Eulerian polynomials of type B.
%
\begin{proof}[Proof of Theorem \ref{theorem1}]
For $\sigma\in\hat{B}_n^-$, the bijection $\sigma\longmapsto\sigma^-$ from $\hat{B}_n^-$ to $\hat{B}_n^+$ defined by $\sigma^-(i)=-\sigma(i)$, shows that 
$$
\hat{B}_n^-(1/x)x^n=\hat{B}_n^+(x),
$$\vskip 0.3cm
\noindent  combine with $\hat{B}_n(x)=\hat{B}_n^+(x)+\hat{B}_n^-(x)$ and after some calculations, the equation~\eqref{p6} follows.
\end{proof}

%
\begin{prop}
For $n\geq1$, the numbers $\hat{B}(n,k)$ satisfy the recurrence relation,
\begin{align*}
\hat{B}(n+1,k)=&(n+1-k)(\hat{B}(n,k)+\hat{B}(n,k-2))
+k(\hat{B}(n,k+1)+\hat{B}(n,k-1))\\
&+\hat{B}(n,k+1)+\hat{B}(n,k-2),
\end{align*}
where $\hat{B}(n,k)=0$ if $k<0$ and $\hat{B}(n,0)=S_n$, see~\eqref{pan}.
\end{prop}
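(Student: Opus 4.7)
The plan is to combine Theorem~\ref{ma-yeh} (equation~\eqref{my2}) with Hoffman's derivative recurrence \eqref{p10} to obtain a polynomial-level recurrence for $\hat{B}_n(x)$, and then read off the coefficient of $x^k$. A direct insertion-type proof looks unappealing: on small examples a single $\sigma\in\mathcal{B}_n$ with $\hat{d}_B(\sigma)=j$ does not split its $2(n+1)$ insertions of $\pm(n+1)$ in the pattern $(n+1-j,n-j,j,j+1)$ suggested by the recurrence. For instance, $\sigma=12\in\mathcal{B}_2$ has $\hat{d}_B=1$, but its six $\pm 3$-insertions yield $\hat{d}_B$-values $3,1,1,3,2,1$ rather than the expected split $(2,1,1,2)$. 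The correct distribution only emerges after summing over all $\sigma$ with a fixed $\hat{d}_B$-value, so I would avoid that route and exploit \eqref{my2} instead.

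First I would set $y=\frac{1+x}{1-x}$, which gives the two useful identities $1+y^2=\frac{2(1+x^2)}{(1-x)^2}$ and $\frac{dy}{dx}=\frac{2}{(1-x)^2}$. Starting from $\hat{B}_{n+1}(x)=(1-x)^{n+1}Q_{n+1}(y)$ and substituting $Q_{n+1}(y)=yQ_n(y)+(1+y^2)Q'_n(y)$ produces
$$\hat{B}_{n+1}(x)=(1+x)\hat{B}_n(x)+2(1+x^2)(1-x)^{n-1}Q'_n(y).$$
To eliminate $Q'_n$, I would differentiate $\hat{B}_n(x)=(1-x)^n Q_n(y)$ in $x$ to obtain $\hat{B}'_n(x)=-n(1-x)^{n-1}Q_n(y)+2(1-x)^{n-2}Q'_n(y)$. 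Solving this for $2(1-x)^{n-2}Q'_n(y)$ and multiplying by $(1-x)(1+x^2)$ expresses the offending $Q'_n$-term purely in terms of $\hat{B}_n$ and $\hat{B}'_n$, leading to the polynomial recurrence
$$\hat{B}_{n+1}(x)=\bigl[(1+x)+n(1+x^2)\bigr]\hat{B}_n(x)+(1-x)(1+x^2)\hat{B}'_n(x).$$

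The final step is to extract the coefficient of $x^k$. Writing $(1+x)+n(1+x^2)=1+x+n+nx^2$, the first summand contributes $(n+1)\hat{B}(n,k)+\hat{B}(n,k-1)+n\hat{B}(n,k-2)$. Writing $(1-x)(1+x^2)=1-x+x^2-x^3$ and using $\hat{B}'_n(x)=\sum_j j\hat{B}(n,j)x^{j-1}$, the second summand contributes $(k+1)\hat{B}(n,k+1)-k\hat{B}(n,k)+(k-1)\hat{B}(n,k-1)-(k-2)\hat{B}(n,k-2)$. Adding and regrouping produces exactly
$$(n+1-k)\bigl[\hat{B}(n,k)+\hat{B}(n,k-2)\bigr]+k\bigl[\hat{B}(n,k+1)+\hat{B}(n,k-1)\bigr]+\hat{B}(n,k+1)+\hat{B}(n,k-2),$$
which is the stated recurrence. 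There is no genuine conceptual obstacle; the main care needed is sign bookkeeping in the expansion of $(1-x)(1+x^2)\hat{B}'_n(x)$ and in the chain-rule step that eliminates $Q'_n(y)$.
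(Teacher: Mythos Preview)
Your proof is correct and follows essentially the same route as the paper: combine \eqref{my2} with Hoffman's recurrence \eqref{p10} to obtain the polynomial identity $\hat{B}_{n+1}(x)=(n+1+x+nx^2)\hat{B}_n(x)+(1-x)(1+x^2)\hat{B}_n'(x)$, then extract the coefficient of $x^k$. The paper states this identity without the intermediate chain-rule details you supply, but the argument is the same.
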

\begin{proof}
By \eqref{my2} and~\eqref{p10}, we have
\begin{align*}
\hat{B}_{n+1}(x)=(n+1+x+nx^2)\hat{B}_n(x)+(1-x)(1+x^2)\hat{B}_n^{'}(x).
\end{align*}
Then the recurrence follows through comparing the coefficients of two sides of the above equation.
\end{proof}
\begin{remark}
Remmel~\cite{Re12} considred  a different alternating descent of  type B permutations.
Lin-Ma-Wang-Wang ~\cite{L-M-W-W} also studied the positivity and divisibility  of alternating Eulerian polynomials of type A.
\end{remark}

%

\bibliographystyle{plain}
\bibliography{Alter}

\end{document}